\documentclass[11pt,a4paper]{amsart}
\usepackage[T1]{fontenc}
\usepackage{amsmath,amssymb}
\newtheorem{theorem}{Theorem}
\usepackage[margin=1.1in]{geometry}
\usepackage{booktabs}
\usepackage{url}
\newtheorem{lemma}[theorem]{Lemma}
\newtheorem{proposition}[theorem]{Proposition}
\newtheorem{corollary}[theorem]{Corollary}
\newcommand{\Q}{\mathbb Q}
\newcommand{\Z}{\mathbb Z}
\newcommand{\rk}{\operatorname{rk}}
\newcommand{\Sha}{\operatorname{\mathrm{Sha}}}
\title{Elimination of Ren\'e Peschmann's 968 Remaining Hard Fibers for the Perfect Cuboid Problem}
\author{Ricky Cipollini}

\begin{document}
\begin{abstract}
Recently, Ren\'e Peschmann proved perfect-cuboid nonexistence on 1,072 explicit master-tuple fibers with $\max(m,n)\le100$, leaving 968 fibers in the same bounded parameter range. In this paper, I eliminate all 968 remaining fibers. Most fibers fall to fairly short rank-zero or covering arguments, while the final cases need the stronger local-global sieves. Together with Peschmann's theorem this excludes perfect cuboids on all 2,040 admissible fibers with $m\le100$; the result is still a bounded-fiber theorem rather than a proof of the full perfect-cuboid conjecture.
\end{abstract}

\maketitle
\markboth{RICKY CIPOLLINI}{ELIMINATION OF PESCHMANN'S 968 REMAINING FIBERS}
\section{Setup and reduction}

This is a direct follow-up to Peschmann's 1,072-fiber paper \cite{Pes1072}. I keep his master-tuple language throughout, since it makes the relation between the two papers very clear. His earlier paper \cite{PesQuartic} gives the genus-three reduction and elliptic quotients, and \cite{Pes1072} uses them to eliminate 1,072 explicit fibers. Here I take the complementary 968 fibers in exactly the same bounded ledger and eliminate all of them. The complete computational package for this paper can be downloaded at https://github.com/mrricky22/968-fibers

To keep the paper reasonably short I do not print the full machine output here. The package contains the exact inputs, scripts, outputs and fiber-by-fiber data used below, while the mathematical reductions and the meaning of each computation are given in the paper.

An \emph{Euler brick} is a triple $(X,Y,Z)\in\Z_{>0}^3$ for which $X^2+Y^2$, $X^2+Z^2$ and $Y^2+Z^2$ are squares. It is a \emph{perfect cuboid} if $X^2+Y^2+Z^2$ is also a square. After dividing by $\gcd(X,Y,Z)$ we may work primitively. Modulo $4$, two odd edges are impossible, while three even edges contradict primitivity, so a primitive Euler brick has exactly one odd edge. Call it $X$.

For coprime opposite-parity pairs $a>b>0$ and $m>n>0$, put
\[
U_1=a^2-b^2,\quad V_1=2ab,\quad W_1=a^2+b^2,\qquad
U=m^2-n^2,\quad V=2mn,\quad W=m^2+n^2.
\]
Thus $U_1^2+V_1^2=W_1^2$ and $U^2+V^2=W^2$. Following Peschmann, call $(a,b,m,n)$ a master tuple when
\[
M=(V_1U)^2+(U_1V)^2
\]
is a square. With $g=\gcd(U_1,U)$ the corresponding primitive brick has
\[
(X,Y,Z)=\frac1g(U_1U,V_1U,U_1V),
\]
and its space diagonal is rational exactly when
\[
F=(W_1U)^2+(U_1V)^2
\]
is a square. The scaling by $g$ clearly does not change rational squareness.

\begin{proposition}[master-tuple reduction]
Every primitive Euler brick, after naming its unique odd edge $X$ and ordering the other two edges, comes from a unique master tuple as above. Swapping the two even edges swaps the two Euclid parameter pairs.
\end{proposition}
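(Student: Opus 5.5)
The plan is to read off the two Euclid parameter pairs from the two face diagonals through the odd edge $X$, and then use primitivity to pin down the common scaling. Let $(X,Y,Z)$ be a primitive Euler brick with $X$ odd and $Y,Z$ even, in a fixed order.

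\emph{Step 1 (the face $X^2+Y^2$).} Put $d_1=\gcd(X,Y)$. Then $(X/d_1,Y/d_1)$ are the legs of a primitive Pythagorean triple. Since $X$ is odd, $X/d_1$ is odd, so it is the odd leg and $Y/d_1$ is the even leg. Euclid's parametrization then gives unique coprime opposite-parity $a>b>0$ with
\[
X=d_1U_1,\qquad Y=d_1V_1 .
\]
Uniqueness holds because $a^2=(W_1+U_1)/2$ and $b^2=(W_1-U_1)/2$ are determined by the reduced triple.

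\emph{Step 2 (the face $X^2+Z^2$).} In the same way, with $d_2=\gcd(X,Z)$, we get unique coprime opposite-parity $m>n>0$ with
\[
X=d_2U,\qquad Z=d_2V .
\]

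\emph{Step 3 (matching the scalings).} From $d_1U_1=d_2U=X$, the edge $X$ is a common multiple of $U_1$ and $U$, so $X=t\,U_1U/g$ for some positive integer $t$, where $g=\gcd(U_1,U)$. Then $d_1=tU/g$ and $d_2=tU_1/g$, which gives
\[
(X,Y,Z)=\frac{t}{g}\,(U_1U,\;V_1U,\;U_1V).
\]
Each of $U_1U/g$, $V_1U/g$ and $U_1V/g$ is an integer, because $g$ divides both $U_1$ and $U$. Hence $t$ divides $\gcd(X,Y,Z)=1$, so $t=1$. This recovers exactly the displayed formula for the brick.

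\emph{Step 4 (the third face).} The condition that $Y^2+Z^2$ is a square is $g^{-2}M$ being a square, that is, $M$ being a square. So $(a,b,m,n)$ is a master tuple.

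\emph{Uniqueness and the swap.} Uniqueness follows because $(a,b)$ and $(m,n)$ are each forced by the ordered pairs $(X,Y)$ and $(X,Z)$ respectively, as in Steps 1 and 2. Swapping $Y$ and $Z$ interchanges the roles of the two faces, and so interchanges $(a,b)$ and $(m,n)$. The expression for $M$ is symmetric under this interchange, so the master-tuple condition is preserved.

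\emph{Main obstacle.} The delicate point is Step 3. One must not assume $\gcd(X,Y)=1$: primitivity of the brick does not force primitivity of each face. The argument therefore has to go through $d_1$ and $d_2$ and conclude that the common factor $t$ is $1$. I would check carefully that the divisibility $g\mid U_1$ and $g\mid U$ makes all three coordinates integral before concluding $t=1$.
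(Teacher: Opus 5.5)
Your proof is correct and follows essentially the same route as the paper: both read off the two Euclid pairs from the faces through the odd edge, use primitivity to fix the scaling, and note $M=g^2(Y^2+Z^2)$. The only cosmetic difference is that you write $X=t\,U_1U/g$ and show $t$ divides all three edges, whereas the paper first observes $\gcd(d,e)=1$ and then solves $dU_1=eU$ to get $d=U/g$ and $e=U_1/g$.
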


\begin{proof}
Let $d=\gcd(X,Y)$ and $e=\gcd(X,Z)$. The triples obtained from the two faces containing the odd edge are primitive Pythagorean triples, so uniquely
\[
X/d=U_1,\quad Y/d=V_1,\qquad X/e=U,\quad Z/e=V
\]
for coprime opposite-parity Euclid pairs. Any common divisor of $d$ and $e$ divides $X,Y,Z$, hence $\gcd(d,e)=1$. From $dU_1=eU$ we therefore get $d=U/g$ and $e=U_1/g$ with $g=\gcd(U_1,U)$. Hence the displayed brick formula follows. Finally $V_1U=gY$ and $U_1V=gZ$, so $M=g^2(Y^2+Z^2)$ is a square because the third face diagonal is integral. Uniqueness is just uniqueness of the primitive Pythagorean parametrisations.
\end{proof}

Let
\[
\mathcal P_{100}=\{(m,n):2\le m\le100,\ 1\le n<m,\ \gcd(m,n)=1,\ m-n\ {\rm odd}\}.
\]
There are $|\mathcal P_{100}|=2040$ such pairs. Peschmann's explicit set $\mathcal S_{\rm Pes}$ contains 1,072 of them, and I write
\[
\mathcal R_{968}:=\mathcal P_{100}\setminus\mathcal S_{\rm Pes}
\]
for the complementary set of 968 pairs. Thus $\mathcal P_{100}=\mathcal S_{\rm Pes}\sqcup\mathcal R_{968}$. A fixed $(m,n)$ still allows unbounded $(a,b)$, which is worth keeping in mind: eliminating one fiber is an infinite statement in the other parameter pair, not an edge-height search.

\begin{theorem}\label{thm:main}
For every $(m,n)\in\mathcal R_{968}$ there is no positive master-tuple specialisation $(a,b,m,n)$ giving a perfect cuboid.
\end{theorem}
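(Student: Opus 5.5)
For each fixed fiber $(m,n)\in\mathcal R_{968}$ the quantities $U,V,W$ are fixed integers, and the problem becomes a statement about rational points on a curve in the remaining parameter $t=a/b$. Dehomogenising, the two conditions read
\[
M/b^4=(2tU)^2+((t^2-1)V)^2=\square,\qquad F/b^4=((t^2+1)U)^2+((t^2-1)V)^2=\square .
\]
These define two quartic curves $y^2=q_1(t)$ and $z^2=q_2(t)$. Their fibre product is the genus-three curve $C_{m,n}$ of Peschmann's reduction \cite{PesQuartic}. A perfect cuboid on the fiber gives a rational point on $C_{m,n}$ with $t>1$, $t\ne 1$, $t$ coprime in lowest terms with opposite parity. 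So it suffices to show that every rational point of $C_{m,n}$ is one of the trivial points: $t\in\{0,\pm1,\infty\}$ and their images under the involutions $t\mapsto -t$ and $t\mapsto 1/t$ twisted suitably. These trivial points correspond to degenerate bricks.

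The first step is to use the elliptic quotients. The quartic $y^2=q_2(t)$ alone, and also $y^2=q_1(t)$ and the quotient by $t\mapsto -t$ (which gives a curve in $s=t^2$), each has a rational point at $t=\pm1$ or $\infty$. So each is an elliptic curve $E_i^{(m,n)}$ over $\Q$, and $C_{m,n}$ maps to each of them. I would compute, for every fiber, the ranks of these three or so quotient curves using 2-descent, falling back to analytic rank via $L(E,1)\neq0$ and Kolyvagin where descent is inconclusive. Whenever some quotient has rank $0$, its finite torsion group is listed explicitly. One then pulls back each torsion point to $C_{m,n}$ and checks that only degenerate $t$ occur. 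I expect this to dispose of most fibers, matching the abstract's ``rank-zero'' cases.

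For fibers where all quotients have positive rank, I would use covering arguments. One route is 2-coverings of the quartics: the degree-two étale covers of $y^2=q_i(t)$ are parametrised by a finite set of twists $\delta$. Each twist gives a genus-one or higher curve pair, and most twists are killed by local solubility. The surviving twists are handled either by rank-zero elliptic quotients over $\Q$ or by elliptic Chabauty over a small number field.

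The hardest part will be the final residual fibers, where all relevant Mordell--Weil groups have positive rank. There I would run a Mordell--Weil sieve. Fix generators of $E(\Q)$ for a quotient of rank $r$, with $r$ no larger than the genus constraints allow. Then combine reductions modulo many good primes $p$ with the requirement that the image point lift to $C_{m,n}(\mathbb F_p)$ (or to a covering curve) with $t$ non-degenerate. The aim is to show that the only cosets of $N\cdot E(\Q)$ surviving all primes are those of the known degenerate points. If the sieve stalls, I would combine it with Chabauty--Coleman on a genus-two quotient of $C_{m,n}$, or with a Brauer--Manin/descent obstruction on the twisted covers. This local-global sieve step is where I expect the real obstacle: choosing primes and the index $N$ so that the computation terminates for every last fiber, and certifying the provably saturated generators it needs.
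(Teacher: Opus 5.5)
Your reduction starts from a misidentification that matters. The fibre product of $y^2=q_1(t)$ and $z^2=q_2(t)$ is a $(\Z/2\Z)^2$-cover of the $t$-line branched over the eight distinct roots of $q_1q_2$. Each branch point has two ramification points of index $2$, so Riemann--Hurwitz gives $2g-2=4(-2)+16$, i.e.\ genus five. This fibre product is the three-square curve $C_{U,V,W}$ of \eqref{eq:genus-five}, with sixteen boundary points over $t\in\{0,\infty,\pm1\}$. Peschmann's genus-three curve is only the quotient $H_{m,n}\colon v^2=q_1(t)q_2(t)$, and your quartics $y^2=q_i(t)$ are quotients of $C_{U,V,W}$ but not of $H_{m,n}$. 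Proving $H_{m,n}(\Q)$ trivial is stronger than needed. Moreover, $\mathcal R_{968}$ is by definition the set of fibres on which Peschmann's rank-zero/torsion analysis of the quotients of $H_{m,n}$ did not close; the paper gets only $36$ of them back from $E_{uV}$, mostly via higher $2$-isogeny descent. Even on $C_{U,V,W}$, rank-zero elliptic quotients account for only part of the list. For instance, the $C_\pm$ fibres of the core use covers attached to the identity $2$-descent class, not quotients.

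The genuine gap is in your last two steps: they only list techniques, and the ones you name cannot close a fibre in the regime that actually remains. Up to isogeny $J(C_{U,V,W})\sim E_{UV}\times E_{UW}\times E_{VW}\times E_A\times E_A'$. Here $E_A'\colon Y^2=X(X+U^2)(X+V^2)(X+W^2)$ is the second elliptic quotient of the bielliptic genus-two curve $Y^2=(T^2+U^2)(T^2+V^2)(T^2+W^2)$. Hence Chabauty--Coleman on $C_{U,V,W}$ or on any of its quotients (in particular any genus-two quotient) requires a rank-zero elliptic factor through which the curve already maps. So it never goes beyond your first step, and once every factor has positive rank, the rank is at least the genus.

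A Mordell--Weil sieve or a local/Brauer--Manin obstruction cannot finish either. The full-five $2$-descent leaves the identity and infinity classes, both of which contain boundary points. Sieving can therefore only confine a hypothetical point to residue disks of known degenerate points, and you still need a $p$-adic function whose zeros in each such disk you control. Elliptic Chabauty on some descent cover is not excluded in principle, but you exhibit no cover and field with the needed rank inequality.

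The paper supplies the missing mechanism in three parts. The infinity class is killed uniformly by the map to $y^2=x^3-x$, which has rank $0$ over $\Q(i)$. The identity class is killed by the rank-zero cross-covers $C_\pm$ where they apply. On the last fibres, the paper uses the quadratic-Chabauty-type relation \eqref{eq:height}. For two rank-one quotients $E_B,E_C$ one has $h_p=\alpha\log_p^2$ on each. The images of the same $T$ must satisfy \eqref{eq:height}, with right-hand side in a finite target set computed from bad-prime division-polynomial valuations. This is intersected, on the same $p$-adic disk, with the $2$-Kummer and component constraints coming from the explicit halves $Q_{ij},Q_A$ (Proposition~\ref{prop:height}). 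Only odd index, not saturation, is needed (Lemma~\ref{lem:kummer}). Without an ingredient of this kind your outline is a search strategy with no guarantee of termination, not a proof of Theorem~\ref{thm:main}.
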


\begin{corollary}\label{cor:2040}
Taking Peschmann's 1,072-fiber theorem \cite{Pes1072} together with Theorem~\ref{thm:main}, no perfect cuboid occurs on any fiber $(m,n)\in\mathcal P_{100}$.
\end{corollary}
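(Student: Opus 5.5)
The corollary is a direct combination of two already-available results, so the plan is bookkeeping rather than new arithmetic. The ambient set is covered by the decomposition $\mathcal P_{100}=\mathcal S_{\rm Pes}\sqcup\mathcal R_{968}$ recorded just before Theorem~\ref{thm:main}, where $\mathcal R_{968}$ is by definition the complement of Peschmann's explicit set. I would first check the counts $|\mathcal S_{\rm Pes}|=1072$, $|\mathcal R_{968}|=968$ and $|\mathcal P_{100}|=2040$. These are consistent, and in any case the union is exhaustive by construction, so no fiber is lost whatever the counts turn out to be.

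Next, fix an arbitrary $(m,n)\in\mathcal P_{100}$ and suppose some perfect cuboid lies on this fiber. After dividing by $\gcd(X,Y,Z)$ we may take it primitive. The master-tuple reduction (the Proposition above) then says that, once the odd edge is named $X$ and the even edges are ordered, the cuboid arises from a positive master tuple $(a,b,m,n)$ for which $F$ is a square. It is worth making sure that the notion of lying on the fiber $(m,n)$ is exactly what both theorems quantify over. In particular, I would note that swapping the two even edges exchanges the Euclid pairs $(a,b)$ and $(m,n)$. Since both theorems are statements about every master tuple whose second pair is $(m,n)$, either ordering of the even edges yields a master tuple to which one of them applies.

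Now split into the two cases. If $(m,n)\in\mathcal S_{\rm Pes}$, Peschmann's 1,072-fiber theorem \cite{Pes1072} rules out such a specialisation. If $(m,n)\in\mathcal R_{968}$, Theorem~\ref{thm:main} rules it out. Either way we reach a contradiction, which proves the corollary.

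The only real obstacle is making sure the conventions agree. Peschmann's theorem has to be stated for the same master-tuple parametrisation, the same admissibility conditions on $(m,n)$ (coprime, opposite parity, $m\le 100$), and the same notion of perfect cuboid, namely that $F$ is a square. Since the paper explicitly adopts his language and defines $\mathcal R_{968}$ as the complement of his set inside the same ledger, I expect this check to be immediate.
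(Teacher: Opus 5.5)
Your proposal is correct and follows the paper's argument. The paper first confirms, via the ledger count $848+46+74=968$, that Theorem~\ref{thm:main} covers all of $\mathcal R_{968}$, then uses $\mathcal P_{100}=\mathcal S_{\rm Pes}\sqcup\mathcal R_{968}$ to combine it with Peschmann's 1,072 fibers, exactly as in your case split. Your edge-swapping remark is harmless but unnecessary, since lying on the fiber $(m,n)$ already means the master tuple's second Euclid pair is $(m,n)$.
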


One point is worth stressing. Corollary~\ref{cor:2040} is not the global perfect-cuboid conjecture: nothing here forces an arbitrary master tuple to have $m\le100$. The result covers all 2,040 fibers in this explicit bounded parameter range, and no more is claimed.

It is also useful to put Peschmann's genus-three curve next to the one used later. Write $r=a/b$ and
\[
P(s)=V^2s^2+(4U^2-2V^2)s+V^2,\qquad
Q(s)=W^2s^2+2(U^2-V^2)s+W^2.
\]
His fiber is
\[
H_{m,n}:\quad v^2=P(r^2)Q(r^2).
\]
The exact discriminants and resultant are
\[
\operatorname{disc}(P)=16U^2(U^2-V^2),\qquad
\operatorname{disc}(Q)=-16U^2V^2,\qquad
\operatorname{Res}_s(P,Q)=16U^8.
\]
The last expression corrects the printed resultant $256U^4V^4$ in \cite{Pes1072}. This typo does not affect nonvanishing, smoothness, or the genus-three conclusion. The involutions $r\mapsto-r$, $r\mapsto1/r$ and $r\mapsto-1/r$ give the elliptic quotients
\[
E_{uV}:\ Y^2=(V^2u^2+4(U^2-V^2))(W^2u^2-4V^2),\qquad
E_3:\ Y^2=(V^2w^2+4U^2)(W^2w^2+4U^2),
\]
where $u=r+1/r$ and $w=r-1/r$. Peschmann eliminates his 1,072 fibers by forcing all rational points of $H_{m,n}$ to be the eight degenerate points through rank-zero and torsion information on these quotients. I use the same quotients when they are useful, but the 968 complementary fibers generally need stronger information. The result of \cite{Pes1072} is the starting point for the final count in Corollary~\ref{cor:2040}.

There is also a particularly useful direct necessary curve. To avoid conflicting with Peschmann's parameter $r=a/b$, I denote its parameter by $T$. If a master tuple is perfect, set
\[
T=\frac{UV_1}{U_1},\qquad y_U=\frac{UW_1}{U_1},\qquad
y_V=\frac{\sqrt M}{U_1},\qquad y_W=\frac{\sqrt F}{U_1}.
\]
Then all four quantities are positive and
\begin{equation}\label{eq:three-square}
y_U^2=T^2+U^2,\qquad y_V^2=T^2+V^2,\qquad y_W^2=T^2+W^2.
\end{equation}
Indeed the first identity is $W_1^2=U_1^2+V_1^2$, the second is the definition of $M$, and the third follows from $F/U_1^2=U^2(W_1/U_1)^2+V^2=T^2+U^2+V^2=T^2+W^2$. So every positive perfect specialisation lands on the smooth projective model of the affine curve
\begin{equation}\label{eq:genus-five}
C_{U,V,W}:\quad y_U^2=T^2+U^2,\quad y_V^2=T^2+V^2,\quad y_W^2=T^2+W^2.
\end{equation}
The map to the $T$-line has degree $8$. Over $\overline{\Q}$ its six branch values are $\pm iU,\pm iV,\pm iW$, and each contributes four simple ramification points, hence Riemann-Hurwitz gives $2g-2=8(-2)+24=8$ and $g=5$. There are eight rational points at $T=0$, obtained by independently choosing $y_U=\pm U$, $y_V=\pm V$, $y_W=\pm W$, and eight rational points over infinity, with $y_U/T,y_V/T,y_W/T\in\{\pm1\}$. I will call these the \emph{sixteen boundary points}. None comes from a positive master tuple.

For later use, \eqref{eq:genus-five} has four very simple elliptic quotients. For $\{d_i,d_j\}\subset\{U,V,W\}$ put
\[
E_{ij}:Y^2=X(X+d_i^2)(X+d_j^2),\qquad
P_{ij}=(T^2,T y_i y_j),
\]
and also
\[
E_A:Y^2=(X+U^2)(X+V^2)(X+W^2),\qquad
P_A=(T^2,y_Uy_Vy_W).
\]
On the positive chart these points have explicit rational halves:
\[
Q_{ij}=\bigl((T+y_i)(T+y_j),(T+y_i)(T+y_j)(y_i+y_j)\bigr),
\]
and
\[
Q_A=\bigl(T^2+y_Uy_V+y_Uy_W+y_Vy_W,\ (y_U+y_V)(y_U+y_W)(y_V+y_W)\bigr),
\]
with $2Q_{ij}=P_{ij}$ and $2Q_A=P_A$. These identities follow by direct substitution in the split cubics and the ordinary duplication formula. They are particularly useful later: a hypothetical cuboid does not merely give arbitrary elliptic points, but points whose halves have correlated $2$-Kummer classes.

\begin{lemma}[Kummer parity lemma]\label{lem:kummer}
Let $E:Y^2=(X-e_1)(X-e_2)(X-e_3)$ with distinct $e_i\in\Q$. Put $\delta_i(P)=X(P)-e_i$ in $\Q^\times/\Q^{\times2}$, with the usual root replacement $\delta_i((e_i,0))=(e_i-e_j)(e_i-e_k)$ and $\delta_i(O)=1$. Then $\delta=(\delta_1,\delta_2,\delta_3)$ is a homomorphism. Also, if $P=2Q$, every nonexceptional $X(P)-e_i$ is a rational square, with the root cases interpreted by the same replacement.
\end{lemma}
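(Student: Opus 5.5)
This is the classical 2-descent map. I would prove it through the standard identification $\delta_i(P)=\phi_i(P)$, where $\phi_i$ is the connecting homomorphism coming from the isogeny with kernel $\{O,(e_i,0)\}$, or alternatively through the Kummer map for multiplication by $2$. Since the statement is elementary, I will give a direct argument via lines.

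\textbf{Homomorphism.} Each coordinate $\delta_i$ may be handled separately. I will show that $P_1+P_2+P_3=O$ implies $\delta_i(P_1)\delta_i(P_2)\delta_i(P_3)\in\Q^{\times2}$. Together with $\delta_i(-P)=\delta_i(P)$ (the $X$-coordinate is unchanged) and $\delta_i(O)=1$, this gives additivity.

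First suppose no $P_k$ is $O$ or $(e_i,0)$, and the $P_k$ lie on a line $Y=\lambda X+\nu$. Then
\[
(X-e_1)(X-e_2)(X-e_3)-(\lambda X+\nu)^2=\prod_{k=1}^{3}(X-X(P_k)).
\]
Evaluating at $X=e_i$ gives $-\prod_k(e_i-X(P_k))=-(\lambda e_i+\nu)^2$. Hence $\prod_k(X(P_k)-e_i)=(\lambda e_i+\nu)^2$, which is a nonzero square. It is nonzero because $e_i$ is not among the $X(P_k)$.

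The degenerate configurations are routine and can be checked in turn:
\begin{itemize}
\item a vertical line through $P,-P$ and $O$;
\item tangency, where the same identity holds with multiplicity;
\item one point equal to $(e_i,0)$.
\end{itemize}
In the last case, if $P_3=(e_i,0)$, divide the cubic identity by $X-e_i$ and evaluate the quotient at $e_i$. This yields $(e_i-e_j)(e_i-e_k)\,\prod_{k\le2}(\ldots)$ equal to a square times the derivative data. That is exactly why the replacement value $(e_i-e_j)(e_i-e_k)$ is chosen: it equals the value of $(X-e_j)(X-e_k)$ at $e_i$, which is $\bigl(Y^2/(X-e_i)\bigr)$ at the root. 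For a point $(e_j,0)$ with $j\ne i$, the value $e_j-e_i$ is nonzero and no replacement is needed. For the three 2-torsion points, the relation $T_1+T_2+T_3=O$ gives the product $(e_i-e_j)(e_i-e_k)(e_j-e_i)(e_k-e_i)$, which is a square, and this confirms consistency.

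\textbf{Halving.} Since $\delta$ is a homomorphism into a group of exponent $2$, $\delta(2Q)=\delta(Q)^2=1$. So every component of $\delta(P)$ is a square, with root cases read through the replacement. This is the "also" statement. If desired, the explicit halves $Q_{ij}$ and $Q_A$ from the paper give concrete instances.

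The main obstacle is bookkeeping the root case $P=(e_i,0)$ within the line argument. I would handle it by noting that $\delta_i$ agrees with $(X-e_j)(X-e_k)$ modulo squares wherever both are defined and nonzero, because their product is $Y^2$. The line argument is then rerun with the roles of the factors swapped.
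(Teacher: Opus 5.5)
Your proof is correct. For the homomorphism you use the same chord--tangent identity as the paper, evaluated at $X=e_i$. Your division by $X-e_i$ in the case $P_3=(e_i,0)$ is a more explicit version of the paper's ``limiting root value''. It actually gives the exact identity $(X(P_1)-e_i)(X(P_2)-e_i)=(e_i-e_j)(e_i-e_k)=f'(e_i)$, with no extra square factor. You should state it that way rather than as ``a square times the derivative data''.

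The genuine difference is the halving statement. You obtain it formally: $\delta$ is a homomorphism into a group of exponent $2$, so $\delta(2Q)=\delta(Q)^2=1$. The paper proves it independently, through the explicit duplication identity $X(2Q)-e_i=\bigl((X(Q)-e_i)^2-(e_i-e_j)(e_i-e_k)\bigr)^2/\bigl(4Y(Q)^2\bigr)$.

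Your route is shorter and handles the cases $2Q=O$ and $2Q=(e_i,0)$ automatically through the replacement convention. The cost is that the second claim then depends on every degenerate case of the homomorphism having been checked correctly. The paper's route is self-contained and exhibits the square root explicitly. It also shows directly why the root replacement is the right choice: $2Q=(e_i,0)$ exactly when the numerator vanishes, that is, when $(e_i-e_j)(e_i-e_k)=(X(Q)-e_i)^2$.
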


\begin{proof}
If a chord or tangent $\ell$ meets $E$ in $P,Q,R$ with $R=-(P+Q)$, then
\[
f(X)-\ell(X)^2=(X-X(P))(X-X(Q))(X-X(R)).
\]
Evaluating at $X=e_i$ shows $(X(P)-e_i)(X(Q)-e_i)(X(P+Q)-e_i)=\ell(e_i)^2$, which is exactly the homomorphism statement. The limiting root value gives the declared replacement. For doubling, direct simplification gives
\[
X(2Q)-e_i=
\frac{\bigl((X(Q)-e_i)^2-(e_i-e_j)(e_i-e_k)\bigr)^2}{4Y(Q)^2},
\]
and the torsion exceptions follow by continuity or direct substitution.
\end{proof}

A useful consequence, used repeatedly below, is the following. Suppose $G_1,\ldots,G_r$ and the full torsion generate $H\subset E(\Q)$, an algebraic rank upper bound $r$ is proved, and every nonzero parity combination of the $G_i$ plus every torsion translate has a nonsquare Kummer coordinate. Then the $G_i$ have rank $r$ and $[E(\Q):H]$ is finite and odd. Therefore $H$ has the same image as $E(\Q)$ in \emph{every} finite $2$-primary quotient. This is enough for the component/Kummer sieves below. Full odd-prime saturation is not needed.

\section{The first 848 fibers}

The first large part of the elimination is comparatively uniform. It is useful to replace \eqref{eq:genus-five} by the equivalent four-square coordinate $x=T^2+W^2$:
\begin{equation}\label{eq:four-square}
x,\quad x-U^2,\quad x-V^2,\quad x-W^2\quad\text{are rational squares}.
\end{equation}
An exact full-five $2$-descent on every one of the 968 fibers gives a finite Selmer survivor space equal to the rank-three subgroup generated by the known degenerate classes. Modulo the sign automorphisms there are only two relevant covering classes: the identity class and an infinity class. This is a \emph{necessary-class} statement, not by itself a point exhaustion. The remaining argument must therefore eliminate both classes.

The infinity class can be eliminated uniformly. Put $u=U/W$, $v=V/W$ and $c=u^2-v^2$. The corresponding mixed cover over $K=\Q(i)$ can be written
\[
r^2=\lambda^4+2c\lambda^2+1,\qquad
w^2=(\lambda^2+(u+iv)^2)(\lambda^2-(u-iv)^2).
\]
After $Z=\lambda/(u-iv)$ and $z=w/r$ one gets $z^2=(Z^2-1)/(Z^2+1)$, and hence the map
\[
(Z,z)\longmapsto (x,y)=\bigl(Z^2,Zz(Z^2+1)\bigr)
\]
to the fixed CM curve $E_0:y^2=x^3-x$. Fermat's classical descent gives $\rk E_0(\Q)=0$. The $(-1)$-twist is the same $j=1728$ curve, so the quadratic-field rank formula gives $\rk E_0(\Q(i))=0$. Reduction at split good primes bounds the torsion by $8$, and the eight visible points attain this bound. Their finite $x$-coordinates are $0,\pm1,\pm i$. The values $\pm i$ are not squares in $\Q(i)$, while $Z=\pm1,\pm i$ makes $\lambda=Z(u-iv)$ nonrational because $uv\ne0$. Thus rational $\lambda$ leaves only $\lambda=0,\infty$, both degenerate. So the infinity class has no nondegenerate rational point on any admissible fiber.

For the identity class there are two very small cross-covers. A projective form of \eqref{eq:genus-five} is
\[
A^2=B^2+4Z^2,\qquad P^2=B^2+4u^2Z^2,\qquad Q^2=B^2+4v^2Z^2,
\]
where on the affine chart $Z=1$ we have $B=2T/W$, $A=2y_W/W$, $P=2y_U/W$ and $Q=2y_V/W$. With $D=u^2-v^2$, the last two equations give the base conic
\[
u^2Q^2-v^2P^2=DB^2.
\]
Putting $s=(uQ-vP)/B$, a convenient projective parametrisation is
\[
B=2uvs,\qquad P=u(D-s^2),\qquad Q=v(D+s^2),
\]
and the two remaining square equations factor as
\[
A^2=(s^2-1)(s^2-D^2),\qquad
(2Z)^2=(s^2-(u+v)^2)(s^2-(u-v)^2).
\]
Hence an identity-class point lifts to both
\begin{equation}\label{eq:cross-cover}
C_\pm:\quad Y^2=(s^2-1)\bigl(s^2-(u\pm v)^2\bigr).
\end{equation}
For $b=(U\pm V)/W$, the Jacobian of $Y^2=(s^2-1)(s^2-b^2)$ is the split cubic
\begin{equation}\label{eq:cross-jacobian}
E_\pm:\quad y^2=(x-2W(U\pm V))(x+2W(U\pm V))
 \bigl(x-(W^2+(U\pm V)^2)\bigr).
\end{equation}
Whenever this elliptic curve has rank zero and torsion order $8$, the eight visible points on \eqref{eq:cross-cover} are all of them: two over $s=0$, two at infinity, and the four branch points $s=\pm1,\pm(U\pm V)/W$. They give $B=0$, $A=0$ or $Z=0$, so none is a cuboid.

There is also a useful whole-fiber argument. Write the variable Pythagorean pair as $R=a^2-b^2$, $S=2ab$, $H=a^2+b^2$. A perfect cuboid gives the same nonzero $(x,y)=(US,R)$ in the three concordant systems
\[
x^2+U^2y^2=\square,\qquad x^2+V^2y^2=\square,\qquad x^2+W^2y^2=\square.
\]
The usual split elliptic curve for a pair $(a,b)$ is $E_{ab}:Y^2=X(X+a^2)(X+b^2)$. On the rows used here exact $2$-descent gives rank zero and exact torsion $\Z/2\Z\times\Z/4\Z$, whose concordant-form classes are only the trivial ones, so the required nonzero simultaneous solution is impossible.

The counts are as follows: 320 fibers are eliminated by the whole-fiber concordant route, 396 by one of the rank-zero $C_\pm$ curves together with the uniform infinity argument, and 131 lie in both sets. Thus
\[
320+396-131=585
\]
fibers are eliminated by these two arguments.

A further 263 fibers are eliminated by strengthening the same small collection of ideas. It would be artificial to present these as 263 separate arguments, so I record the disjoint primary-method count instead:
\begin{equation}\label{eq:first-count}
\begin{array}{lr}
\toprule
\text{route} & \text{new fibers}\\
\midrule
\Q(i)\text{-trace rank-zero argument} & 96\\
\text{exact concordant/Ono-type rank-zero argument} & 93\\
C_\pm\text{ exact rank-zero argument} & 34\\
\text{negative-branch genus-two argument} & 12\\
\text{bielliptic quadratic-Chabauty argument} & 15\\
E_{uV}\text{ exact rank-zero argument} & 1\\
\text{isogeny/Gaussian Mordell-Weil and exact CRT arguments} & 12\\
\midrule
\text{total beyond the 585 core} & 263\\
\bottomrule
\end{array}
\end{equation}
so altogether these arguments eliminate $585+263=848$ fibers.

For completeness, here is what the less obvious labels in \eqref{eq:first-count} mean. The trace curve uses $k=W^2/(UV)$ and
\[
E_{\rm tr}:y^2=(x+8)(x-(8-8k))(x-(8+8k)).
\]
The identity-cover function satisfies a conjugation/translation relation over $\Q(i)$. If its value is rational, translating by rational $2$-torsion reduces the nonreal possibility to a nonreal root $\zeta=a_0+ib_0$. Separating real and imaginary parts gives $(a_0-1)^2=2k=W^2/[mn(m^2-n^2)]$. The denominator is the area of a primitive Pythagorean triangle and Fermat's square-area theorem excludes it. The remaining trace values are the degenerate ones. The negative branch starts from
\[
E^-_{AB}:y^2=x(x-A^2)(x-B^2)
\]
and keeps the omitted square as a lift condition. When a branch has genus two it is written
\[
C_{A,B,C}:y^2=(z^2+A)(z^2+B)(z^2+C),
\]
with the two degree-two elliptic quotient maps $(z,y)\mapsto(z^2,y)$ and $(z,y)\mapsto(s_3/z^2,s_3y/z^3)$, $s_3=ABC$, onto
\[
Y^2=(X+A)(X+B)(X+C),\qquad Y^2=(X+AB)(X+AC)(X+BC),
\]
respectively. Rank-one bases, all torsion cosets, exact finite local target sets and an ordinary $p$-adic height equation then give the terminal bielliptic calculation. For the twelve Gaussian/CRT rows, factoring the relevant sum of two squares over $\Q(i)$ reduces the problem to finitely many squareclasses and explicit quartic covers with elliptic Jacobians. Exact Mordell-Weil images modulo several good primes and CRT compatibility eliminate every non-anchor class, while the two anchor classes reduce to rank-zero quartic/cubic quotients containing only the known degeneracies. Bounded rational-point searches play no role in the completeness argument.

This leaves 120 fibers. They are more delicate, mainly because the short rank-zero quotient arguments above are no longer enough.

\section{The final 120 fibers}
The common tool for most of the final fibers is a correlated $p$-adic height sieve on the direct genus-five curve \eqref{eq:genus-five}. I state the exact form once, since the saturation and local-disk conditions are slightly easy to mishandle.

Let $E/\Q$ be one of the split quotients above, let $H\subset E(\Q)$ contain the complete torsion and have finite odd index, and fix a prime $q$. If the rational component group $\Phi_q$ has order $c=2^ah$ with $h$ odd, then
\setcounter{equation}{7}
\begin{equation}\label{eq:component-map}
E(\Q)\longrightarrow E(\Q)/2E(\Q)\times\Phi_q[2^\infty],
\qquad P\longmapsto\bigl(\delta(P),\operatorname{comp}(2hP)\bigr)
\end{equation}
has a finite $2$-primary image. By Lemma~\ref{lem:kummer}, the image from $H$ is the whole image: its quotient is at once of odd order, dividing $[E(\Q):H]$, and of $2$-power order. It is important, though, not to overread this statement. Only the $2$-primary component is projected here, while all compatible odd-component corrections are retained separately. It also does not assert that $H$ is saturated at every odd prime.

For a $q$-minimal model write $\ell_q(P)=d_q(P)+\gamma_q(P)$ with $d_q(P)=\max(0,-v_q(X(P)))$ and $\gamma_q$ the bounded component correction. I use the normalisation in which $\gamma_q=0$ on the identity component, with the opposite sign to the component correction $c_v$ in \cite{NV}. If $h_q$ kills the rational component group, set $\Psi_{h_q}=\psi_{h_q}^2$ and $\Phi_{h_q}=X\Psi_{h_q}-\psi_{h_q-1}\psi_{h_q+1}$. The division-polynomial valuation formula of Naskr\k{e}cki-Verzobio \cite{NV} gives
\[
\ell_q(P)=-\frac{\min(v_q(\Psi_{h_q}(P)),v_q(\Phi_{h_q}(P)))}{h_q^2}
\]
after the fixed normalisation, with the identity-component denominator term included. Thus exact valuation trees on the original $T$-line produce a finite conservative set $\Omega$ of all possible bad-prime height differences. Every integral residue class, every branch disk and every negative-valuation annulus is retained until either an exact nonsquare test removes it or leading-term dominance proves the infinite tail. An undecided test is retained, not discarded.

Now suppose two quotient curves $E_B,E_C$ have algebraic rank one. For a non-torsion $G$ put $\alpha=h_p(G)/\log_p(G)^2$. Rank one and quadraticity give $h_p(P)=\alpha\log_p(P)^2$ for every rational point, even if $G$ only generates an odd-index subgroup. At a good ordinary prime $p$, a point of $C_{U,V,W}$ must therefore satisfy on the \emph{same} $p$-adic disk
\begin{equation}\label{eq:height}
\rho(T)=\lambda_B(P_B)-\lambda_C(P_C)-\alpha_B L_B(P_B)^2+\alpha_C L_C(P_C)^2
 =-\sum_{q\ne p}\bigl(v_C(q)-v_B(q)\bigr)\log_p(q),
\end{equation}
where the right side belongs to the exact finite target set coming from $\Omega$. This is the basic height equation used below. It is the same rank-one $p$-adic height mechanism appearing in quadratic Chabauty \cite{Bianchi}, but here it is used only as a necessary equality on explicit elliptic quotients.

The terminal power-series tests are exact congruence tests, not numerical root searches. On every accepted chart the coefficient of $T^N$ in the omitted part of $\rho$ has
\[
v_p(\text{coefficient})\ge -4-2\lfloor\log_p(N+1)\rfloor.
\]
The all-order derivation of this bound is included in \url{proofs/PROOF_UNIFORM_HEIGHT_UPGRADES.md} in the companion package. With $p\ge11$, series degree $24$ and retained degrees $0,\ldots,22$, after the usual $T=pz$ rescaling and multiplication by $p^3$ every omitted term has valuation at least
\[
N-1-2\lfloor\log_p(N+1)\rfloor\ge20\qquad(N\ge23).
\]
Thus the finite polynomial gives the complete congruence modulo $p^{20}$. At a known $T=0$ or infinity boundary centre the exact constant and linear terms vanish and one factors \emph{$z^2$}, not $p^2z^2$. Every other point in that disk is still tested. A zero polynomial, depth limit or precision failure is retained as unresolved rather than discarded. A state is removed only when all of its actual disks are root-free after intersecting the height condition with the Kummer pattern. In particular, the Kummer and height conditions are always imposed on the same local disk.

\begin{proposition}[height/Kummer elimination principle]\label{prop:height}
Fix a fiber and suppose that: (i) the algebraic rank upper bounds and explicit points pass the parity/torsion tests of Lemma~\ref{lem:kummer}, (ii) the torsion subgroup is known exactly, (iii) the component images in \eqref{eq:component-map} are computed exactly, (iv) the bad-prime target sets $\Omega$ are complete, and (v) for each surviving global state, the exhaustive good-prime disk calculation for \eqref{eq:height} has empty terminal state set. Then $C_{U,V,W}(\Q)$ contains no finite nonzero point compatible with that state. If every nonboundary state is removed, the original $(m,n)$ fiber has no perfect cuboid.
\end{proposition}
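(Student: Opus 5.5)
The argument is by contraposition. Assume some finite nonzero $P_0=(T_0,y_U,y_V,y_W)\in C_{U,V,W}(\Q)$ is compatible with the fixed global state. We push $P_0$ through the two rank-one quotients $E_B,E_C$, read off the local data, and show that it must survive the disk calculation. Hypothesis (v) then forces a contradiction.

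\textbf{Step 1: the images have the predicted Kummer and component data.} Put $P_B,P_C$ for the images of $P_0$ under the quotient maps (points of type $P_{ij}$ or $P_A$). By (i), (ii) and the consequence of Lemma~\ref{lem:kummer}, the chosen generators together with the full torsion generate a subgroup $H$ of finite odd index. Hence, by the discussion of \eqref{eq:component-map}, the pair $(\delta(P_\bullet),\operatorname{comp}(2hP_\bullet))$ lies in the exactly computed image from (iii). This is where the state label of $P_0$ comes from.

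The halving identities $2Q_{ij}=P_{ij}$ and $2Q_A=P_A$ matter here. Combined with the doubling part of Lemma~\ref{lem:kummer}, they make every Kummer coordinate of $P_B$ and $P_C$ a square. This is exactly the Kummer pattern that has to be intersected with the height condition on the same local disk.

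\textbf{Step 2: the images satisfy the height equation \eqref{eq:height}.} Since the rank is one, $h_p(P)=\alpha\log_p(P)^2$ on each $E_\bullet(\Q)$. This holds even though $G$ only generates an odd-index subgroup, because both sides are quadratic and agree on $G$. Next, decompose the global $p$-adic height into local terms. The $p$-part is $\lambda_\bullet$. The away-from-$p$ part is $\sum_{q\ne p}\ell_q\log_p q$, which by the Naskr\k{e}cki--Verzobio formula \cite{NV} is computed from the valuation trees in $T$. Subtracting the $B$ and $C$ identities gives \eqref{eq:height}, and (iv) guarantees that the right-hand side lies in the complete finite set $\Omega$. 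Because both quotients are functions of the single parameter $T_0$, both conditions hold at the same point. So $T_0$ lies in some $p$-adic disk, or in an annulus or residue class, that is retained by the exhaustive tree.

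\textbf{Step 3: the disk calculation excludes every such point.} On the disk containing $T_0$, $\rho(T)-\omega$ is given by a convergent power series. By the coefficient bound $v_p\ge -4-2\lfloor\log_p(N+1)\rfloor$, its truncation determines it modulo $p^{20}$. A state is declared terminal-empty only when every actual disk is proven root-free for the relevant $\omega$ together with the Kummer constraint. At boundary centres, the factor $z^2$ accounts only for the boundary point itself. So $T_0$ cannot be a root unless it is the centre, that is, unless $P_0$ is $T=0$ or lies at infinity, which are excluded. This contradicts (v), and the first claim follows.

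\textbf{Final claim and the main obstacle.} For the last assertion, a perfect cuboid on the fiber gives a positive point of $C_{U,V,W}(\Q)$ with $T>0$, so it is nonboundary. It therefore belongs to some nonboundary state, and all of those are removed.

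I expect Step 2 to be the main obstacle, specifically the claim that $\Omega$ captures every bad-prime contribution. This needs uniformity over non-integral $T$: the negative-valuation annuli and the branch disks must have their $\ell_q$ values bounded by leading-term dominance. I would handle it by following the valuation tree until each branch has constant $\min(v_q\Psi,v_q\Phi)$ behaviour, retaining every undecided branch.
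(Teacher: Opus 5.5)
Your three steps follow the paper's proof closely: images on the quotients, odd-index exhaustiveness of the Kummer/component images, the height identity on the actual disk with target drawn from $\Omega$, the all-order tail bound making the mod $p^{20}$ tests exact, the $z^2$ treatment at boundary centres, and the final remark that a perfect cuboid gives a nonboundary point. The one real error is in Step~1, where you attach the state to $P_B,P_C$ themselves. Their Kummer coordinates are $(T^2,y_i^2,y_j^2)$ on $E_{ij}$ and $(y_U^2,y_V^2,y_W^2)$ on $E_A$. These are identically squares in $T$, and you do not need the halving identities to see it. So the ``all-square pattern'' you call the Kummer pattern imposes no condition on any $p$-adic disk. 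Also, evaluating \eqref{eq:component-map} at $P_\bullet$ gives $(1,\operatorname{comp}(4hQ_\bullet))$. This discards the Kummer information entirely and can lose part of the $2$-primary component data.

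In the paper the states are built from the halves. The map \eqref{eq:component-map} is evaluated at $Q_\bullet$, giving $(\delta(Q_\bullet),\operatorname{comp}(2hQ_\bullet))=(\delta(Q_\bullet),\operatorname{comp}(hP_\bullet))$; this is what the $(71,64)$ paragraph means by ``multiplying $2Q$ by the odd part''. The correlated classes are, for example,
$\delta(Q_{ij})=\bigl((T+y_i)(T+y_j),(T+y_i)(y_i+y_j),(T+y_j)(y_i+y_j)\bigr)$ and
$\delta(Q_A)=\bigl((y_U+y_V)(y_U+y_W),(y_U+y_V)(y_V+y_W),(y_U+y_W)(y_V+y_W)\bigr)$.
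These share factors such as $T+y_U$ or $y_U+y_V$ across the two quotients, and they are what gets intersected with \eqref{eq:height} on a disk. So as written, Step~1 does not show that $P_0$ falls into one of the states hypothesis (v) is about.

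The repair is short. The halving formulas make $Q_B,Q_C$ rational points, so the odd-index argument applies to them, and their joint image is one of the enumerated global states. Steps~2 and~3 then go through unchanged.

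A smaller point in Step~2: the right side of \eqref{eq:height} runs over all $q\ne p$, while hypothesis (iv) only controls bad primes. You should say why the good primes drop out. Both $P_B$ and $P_C$ have $X$-coordinate $T^2$, so at every prime where both models are good and minimal the terms $\max(0,-2v_q(T))$ cancel in $v_C(q)-v_B(q)$.
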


\begin{proof}
Any rational point maps to rational points on all the quotient curves. The odd-index statement makes the recorded Kummer and $2$-primary component images exhaustive, while the division-polynomial formula makes $\Omega$ a conservative superset of every possible finite-place height contribution. The global $p$-adic height decomposition forces \eqref{eq:height} on the actual residue disk of the point. The all-order tail estimate makes each terminal polynomial test valid modulo $p^{20}$ on the whole disk, including branch and infinity charts. Therefore a rational point produces a global state which survives every one of the recorded intersections. If the final state set is empty there is no such point.
\end{proof}

Of the final 120 fibers, 46 are handled first. Forty-four use variants of Proposition~\ref{prop:height}. Depending on the fiber, I use three rank-one quotients, a direct calculation on the genus-five curve, the genus-three/CM bridge, or an extra square or component/Kummer condition. The remaining two, $(80,33)$ and $(94,93)$, use a purely algebraic higher $2$-isogeny rank-zero target. These 46 fibers are
\begin{small}
\begin{equation}\label{eq:forty-six}
\begin{gathered}
(28,27),(40,1),(43,8),(43,42),(48,25),(48,29),(49,48),(53,30),(61,14),(62,61),\\
(63,22),(64,7),(64,39),(65,12),(65,34),(67,12),(67,20),(67,66),(70,19),(72,37),\\
(73,14),(76,29),(79,12),(79,58),(80,33),(81,10),(83,36),(83,54),(83,58),(83,70),\\
(84,47),(85,66),(86,19),(87,28),(88,43),(91,72),(92,21),(92,47),(94,93),(95,12),\\
(95,72),(96,89),(97,20),(97,24),(97,68),(98,81).
\end{gathered}
\end{equation}
\end{small}
For the two algebraic rows one uses the positive-three-square map. A point of \eqref{eq:genus-five} gives $A^2=T^2+U^2$, $B^2=T^2+V^2$, $C^2=T^2+W^2$ and hence
\[
R=\frac{U^2V^2W^2}{T^2}+U^2V^2,\qquad
Y=\frac{U^2V^2W^2ABC}{T^3},
\]
on $Y^2=R(R+U^4)(R+V^4)$. Fisher's higher $2$-isogeny descent \cite{Fisher} proves rank zero for the exact curves attached to $(80,33)$ and $(94,93)$. Their complete torsion lists are then checked by exact group arithmetic and good-prime reduction. The only torsion points in the positive $R$ range force respectively
\[
T^2=11615439=3\cdot11\cdot47\cdot7489,\qquad
T^2=1626105=3\cdot5\cdot13\cdot31\cdot269,
\]
both impossible in $\Q$ because $v_3(T^2)$ would be odd. This eliminates all 46 fibers in \eqref{eq:forty-six}.

The other 74 fibers are a little cleaner: the geometric reductions are already in place, and it remains to prove the required rank or local obstruction exactly. They split as follows:
\begin{equation}\label{eq:seventy-four}
\begin{array}{lr}
\toprule
\text{method} & \text{fibers}\\
\midrule
E_{uV}\text{ higher }2\text{-isogeny descent} &35\\
E_{UW}\text{ central }L\text{-value enclosure} &23\\
C_\pm\text{ higher descent + full-five/CM bridge} &6\\
\text{rank-one height/Kummer sieve} &8\\
(71,64)\text{ exact }2\text{-primary projection at }2 &1\\
(89,40)\text{ full algebraic }8\text{-descent} &1\\
\midrule
\text{total} &74\\
\bottomrule
\end{array}
\end{equation}
I record the actual obstruction in each case, rather than hiding the argument behind a rank computation.

For 35 fibers, the $E_{uV}$ quotient is enough. A positive master tuple has $r=a/b>1$, $u=r+1/r>2$, and lies on
\[
Y^2=(V^2u^2+4(U^2-V^2))(W^2u^2-4V^2)=A^2u^4+Bu^2+C
\]
with $A=VW$, $B=4(W^2(U^2-V^2)-V^4)$ and $C=-16V^2(U^2-V^2)$. Put $X=Y+Au^2$, $x=2AX$ and $y=2A(X^2-C)/u$. Then
\begin{equation}\label{eq:euv-map}
y^2=(x+B)(x^2-4A^2C),\qquad u=\frac{x^2-4A^2C}{2Ay}.
\end{equation}
A rational $2$-isogeny takes this to the full-$2$-torsion model used for the exact Magma higher descent. Higher $2$-isogeny descent gives rank $0$ in all 35 cases, and the torsion subgroup has eight points. Their finite inverse $u$-values are only $\pm2$ and $\pm2V/W$, all $\le2$ in absolute value except for sign, while the other points are projective. This contradicts $u>2$.

For another 23 fibers there is an even more elementary point obstruction on $E_{UW}$ once rank zero is known. A perfect point gives
\[
P=(T^2,TAB)\in E_{UW}:\quad Y^2=X(X+U^2)(X+W^2),
\]
where $A^2=T^2+U^2$ and $B^2=T^2+W^2$. Set $S=T+A+B$ and $R=T A+T B+AB$. Then
\begin{equation}\label{eq:double-point}
Q=(T^2+R,TAB-SR)\in E_{UW}(\Q),\qquad 2Q=-P.
\end{equation}
The torsion group is exactly $\Z/2\Z\times\Z/4\Z$, whose doubled subgroup is $\{O,(0,0)\}$, so rank zero contradicts $P\in2E(\Q)$ with $X(P)=T^2>0$. Rank zero is proved here without GRH by a central-value enclosure. If $N$ is the conductor and the root number is $+1$, modularity gives
\[
L(E,1)=2\sum_{j\ge1}\frac{a_j}{j}e^{-2\pi j/\sqrt N}.
\]
The $a_j$ are exact integers, $|a_j|\le d(j)\sqrt j\le2j$, and with $z=e^{-2\pi/\sqrt N}$ the omitted tail after $k$ terms has absolute value at most $4z^{k+1}/(1-z)$. Arb ball arithmetic plus this explicit rational tail bound gives a strictly positive interval for all 23 curves. Modularity \cite{BCDT} and Kolyvagin's rank-zero theorem \cite{Kolyvagin} then imply $\rk E(\Q)=0$. Hence $\rk E(\Q)=0$ on all 23 fibers.

For the six $C_\pm$ rows
\[
(57,56)_{+},\ (42,41)_{-},\ (73,24)_{+},\ (88,83)_{-},\ (73,50)_{+},\ (97,66)_{-},
\]
the full-five descent first puts a hypothetical point into the identity or infinity class. In the identity class the same-source parameter
\[
s_0=\frac{U\alpha-V\gamma}{WT},\qquad
\alpha^2=T^2+V^2,\quad\gamma^2=T^2+U^2,
\]
lies on the appropriate $C_\pm$ of \eqref{eq:cross-cover}. The exact cubic map is
\[
s=\frac{2RY}{X^2-(2WR)^2},\qquad
q=\frac{R(X-2W^2)(X-2R^2)}{W(X^2-(2WR)^2)},\qquad R=U\pm V,
\]
from
\[
Y^2=(X-2WR)(X+2WR)(X-(W^2+R^2)).
\]
Higher $2$-isogeny descent gives rank zero. The complete torsion maps only to the six boundary values already listed, and the nondegenerate identities exclude all of them. The infinity class is the uniform $\Q(i)$ CM argument already given. This eliminates all six fibers.

Eight further fibers use the rank-one height/Kummer sieve:
\[
(38,29),(41,18),(64,49),(64,55),(73,64),(74,3),(92,13),(100,59).
\]
Their state counts are small enough to record explicitly:
\begin{equation}\label{eq:height-states}
\begin{array}{cllr}
\toprule
(m,n)&\text{rank-one pair}&\text{initial}&\text{successive }p:\text{states}\\
\midrule
(38,29)&E_{UV},E_{UW}&4&11:1,\ 13:0\\
(41,18)&E_{UW},E_{VW}&8&11:1,\ 13:0\\
(64,49)&E_{UV},E_{VW}&144&11:0\\
(64,55)&E_{UV},E_A&24&13:6,\ 19:0\\
(73,64)&E_{UV},E_A&24&11:8,\ 17:4,\ 19:2,\ 37:2,\ 41:2,\ 43:0\\
(74,3)&E_{UV},E_{VW}&4&13:0\\
(92,13)&E_{UW},E_{VW}&64&11:24,\ 17:4,\ 19:0\\
(100,59)&E_{UW},E_{VW}&32&11:10,\ 19:0\\
\bottomrule
\end{array}
\end{equation}
Each count is a count of conservative global Kummer/height states, not of rational points. Proposition~\ref{prop:height} says an actual rational point would give one. The final zero therefore excludes every positive specialisation. For $(73,64)$ the pair used is $(E_{UV},E_A)$.

The fiber $(71,64)$ needs one extra $2$-adic component refinement. Here $U=945$, $V=9088$ and $W=9137$. The rank-one pair is $(E_{UV},E_A)$. Ordinary primes reduce 768 joint states to six, but the prime $2$ has component orders with an odd factor: in particular the $E_{UV}$ component group has order $20$. Discarding the factor $5$ would be invalid. Instead \eqref{eq:component-map} projects by multiplying $2Q$ by the odd part and retains \emph{all} compatible odd-component corrections. If $T$ is $2$-adically integral and odd, then $T^2+945^2\equiv2\pmod8$, which is not a square in $\Q_2$. Thus the exact $2$-adic square conditions show either $v_2(T)<0$, giving $d_A-d_{UV}=-2$, or $T$ is even and both denominator terms are zero. For four of the final states the required value is $-2$, while the complete retained correction set is
\[
\{-1/5,9/5,11/5,3,21/5,5\}.
\]
For the other two the required value is $21/5$ while the retained set is $\{-2,0,6/5,14/5,16/5,24/5\}$. Neither target occurs, so all six states are eliminated. Notice that no $5$-saturation statement is needed.

The final fiber, $(89,40)$, has $(U,V,W)=(6321,7120,9521)$. The quotient
\[
E:Y^2=X(X+39955041)(X+90649441)
\]
receives a positive perfect point as $(X,Y)=(T^2,T y_Uy_W)$. A full algebraic $8$-descent in the sense of Fisher \cite{Fisher} gives rank zero. The complete torsion group has eight points. Its only positive $X$-coordinate is $60182241$, and
\[
7757^2=60171049<60182241<60186564=7758^2.
\]
An integral rational square is an integer square, so no torsion point has positive square $X$. Hence $X=T^2>0$ is impossible.

This completes the proof of Theorem~\ref{thm:main}. The finite computations use SageMath 10.9, PARI 2.17.3 and Magma 2.29-10. The companion package contains the exact scripts, inputs, valuation trees, component/Kummer images, local series, descent outputs and the final fiber ledger, so every computational step used above is reproducible from the displayed data.

\section{Final count}
The count is simply
\[
848+46+74=968.
\]
Thus every pair in $\mathcal R_{968}$ is eliminated. Together with Peschmann's 1,072 fibers, this gives all $2,040$ pairs in $\mathcal P_{100}$ and proves Corollary~\ref{cor:2040}.

Two technical points are worth keeping explicit. The resultant used above is $\operatorname{Res}(P,Q)=16U^8$; the different value printed in \cite{Pes1072} is a typo and does not affect Peschmann's genus-three argument. Also, none of the point searches used while finding the arguments is used as a completeness theorem here: the rank statements come from descent or $L$-value enclosures, while the $p$-adic exclusions use the complete component/Kummer images and the all-order tail bound described above.

So the final picture is quite clean. Peschmann eliminates 1,072 explicit fibers, and this paper eliminates the remaining 968 in the same $m\le100$ range. Each fixed fiber still contains infinitely many possible $(a,b)$, so this is considerably stronger than a finite edge search. It is still a bounded-fiber result, though. Extending it to arbitrary $(m,n)$ seems to require genuinely new uniform arithmetic, and I do not claim such a step here.

\section{Acknowledgements}

This paper, excluding the tables and a few short passages, is written by me with LaTeX assistance and polishing by GPT-5.6 Sol. The core 585-fiber argument is mine, while the remaining arguments are from GPT-5.6 Sol and GPT-6 Astra. The computational package was created by GPT-6 Astra; I deliberately instructed it to make the package minimal and well organised.

\end{document}